
\documentclass[preprint]{elsarticle}

\usepackage{hyperref}
\bibliographystyle{elsarticle-num}
\usepackage{graphicx}
\usepackage{psfrag}
\usepackage{amssymb}
\usepackage{amsmath}
\usepackage{epstopdf}
\usepackage{nicefrac}
\usepackage{enumerate}
\usepackage{xcolor}
\usepackage{float}
\restylefloat{table}
\usepackage{placeins}
\usepackage{verbatim}
\usepackage{footnote}

\newtheorem{thm}{Theorem}[section]
\newproof{proof}{Proof}

\newtheorem{lemma}[thm]{Lemma}

\newproof{rmk}[proof]{Remark}





\newcommand{\lo}[1]{\underline{#1}}
\newcommand{\hi}[1]{\overline{#1}}

\newcommand{\field}[1]{\mathbb{#1}}

\newcommand{\R}{\field{R}} 








\newcommand{\ivrho}{{\boldsymbol{\rho}}}
\newcommand{\iv}[1]{\boldsymbol{#1}}
\newcommand{\calC}{{\mathcal{C}}}

\input xy
\xyoption {all}


\journal{Physica D}

\begin{document}

\begin{frontmatter}

\title{Rigorous enclosures of rotation numbers\\ by interval methods}

\author{A.~Belova}
\ead{anna.belova@math.uu.se}
\address{Department of Mathematics, Uppsala University, Box 480,
751 06, Uppsala, Sweden}

\begin{abstract}
We apply set-valued numerical methods to compute an accurate enclosure of the rotation number. The described algorithm is supplemented with a method of proving the existence of periodic points, which is used to check the rationality of the rotation number. A few numerical experiments are presented to show that the implementation of interval methods produces a good enclosure of the rotation number of a circle map.
\end{abstract}

\begin{keyword}
Rotation number\sep Circle map\sep Rigorous computation\sep Interval arithmetic
\end{keyword}

\end{frontmatter}



\section{Introduction}
Given an orientation-preserving homeomorphism $f\colon S^1 \to S^1$ of the circle, the \textit{rotation number} $\rho$ is a topological invariant of $f$, defined as $\rho = \rho(f) = \rho(F) (\mbox{mod }1)$, where
\begin{equation}\label{eq:rho_def}
\rho(F) := \lim_{n \to \infty} \frac{F^n(x) - x}{n}.
\end{equation}
Here the function $F$  -- a \textit{lift} of $f$ -- is a homeomorphism of the real line, satisfying $f\circ\pi(x) = \pi\circ F(x)$, where $\pi\colon \R\to S^1$ is the natural projection given by $\pi(x) = x (\mbox{mod }1)$. By a classical result of Poincar\'e \cite{hp1881jm} the limit (\ref{eq:rho_def})is well-defined up to an integer, and independent of the choice of $x$. The circle map $f$ has a periodic point if and only if its rotation number is rational: if $\rho = p/q$, where $p$ and $q$ are co-prime integers, then $f$ has a periodic point of prime period $q$. If, on the other hand, the rotation number is irrational, then there are two possibilities: either all orbits are dense in $S^1$ and $f$ is topologically conjugate to a rigid rotation or there exists a Cantor set $\calC \subset S^1$ which is invariant under $f$ and both forward and backward orbits of all points converge to $\calC$, and then $f$ is semi-conjugate to a rigid rotation. 

Apart from a rich flora of theorems concerning rotation numbers for generic classes of circle maps, there exists several results concerning the estimation of the rotation number $\rho$ of a given circle map $f$, see e.g. \cite{Pavani1995191}, \cite{Seara2006107}, \cite{Luque20082599}. All such existing methods require the computation of very long trajectories of $f$; this is apparent from the basic definition (\ref{eq:rho_def}) of the rotation number. When the dynamics of $f$ displays sensitive dependence (due to regions of expansion), it is well-known that numerical computations of even moderately long trajectories can be very inaccurate. As such it may be warranted to question the reliability of numerically computed estimates of a rotation number. We address this issue by presenting a method that produces explicit, computable \textit{bounds} on $\rho$ for a given $f$. As a result, we can e.g. guarantee the correctness of the $n$ first decimals of our computed estimate of $\rho$. 

In order to guarantee a mathematical rigour in our numerical work, we base all computations on set-valued mathematics. This approach is based on interval arithmetic \cite{moore1966interval,neumaier1990interval,MR2807595}, and is very well suited for controlling the propagation of numerical errors throughout long iterative processes.

This paper has the following structure: we begin by recalling some algorithms that - theoretically, and under the right circumstances - can be used to produce enclosures of a rotation number. Next, we make the most efficient of these algorithms practically applicable by deriving explicit bounds that are needed in the computations. The end-product is a tight enclosure of the sought rotation number $\rho$. By adding a sub-algorithm that proves the existence (and uniqueness) of moderately long periodic orbits, we can also handle the case of rational rotation numbers. Finally, we present some numerical results from our implementation of the described algorithm.

\section{Algorithms for computing the rotation number}\label{sec:Algorithms} 

The most simple-minded algorithm for computing the rotation number is a direct application of the basic definition (\ref{eq:rho_def}).

\begin{thm}\label{thm:linear} 
Let $\rho_N(x) = \frac{F^N(x) - x}{N}$. Then $|\rho_N(x) - \rho| \le \frac{1}{N}$ for all $x$ and $N$.
\end{thm}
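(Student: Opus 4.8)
The plan is to exploit the classical "quasi-periodicity" estimate for lifts of circle homeomorphisms: for any lift $F$ of an orientation-preserving circle homeomorphism, the sequence $a_n := F^n(0) - 0$ is \emph{subadditive up to an error of one}, in the sense that $a_{m+n}$ differs from $a_m + a_n$ by less than $1$. This follows from the commutation relation $F(x+1) = F(x)+1$ together with monotonicity of $F$: if $y - 1 < z \le y$ then $F(y) - 1 < F(z) \le F(y)$, and iterating this comparison controls how far $F^n(x)$ can drift from $F^n(0)$ for any $x$, giving $|F^n(x) - x - (F^n(0))| < 1$ uniformly in $x$ and $n$. I would state and prove this uniform bound first, since everything else is a consequence of it.

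The key steps, in order: (i) show $\sup_x |F^n(x) - x - (F^n(0) - 0)| < 1$, i.e. the displacement function $x \mapsto F^n(x) - x$ has oscillation at most $1$ (indeed strictly less, but $\le 1$ suffices); (ii) deduce the near-additivity $|a_{m+n} - a_m - a_n| < 1$ by writing $a_{m+n} = F^{m+n}(0) - 0 = \big(F^m(F^n(0)) - F^n(0)\big) + \big(F^n(0) - 0\big)$ and applying (i) to the first bracket with $x = F^n(0)$; (iii) from near-additivity, the standard Fekete-type argument gives that $a_n/n$ converges and that $|a_n/n - \rho(F)| \le 1/n$ — concretely, iterating $|a_{kn} - k a_n| < k$ yields $|a_{kn}/(kn) - a_n/n| < 1/n$, and letting $k \to \infty$ along the subsequence (which converges to $\rho(F)$ by Poincaré's theorem, already cited in the excerpt) gives $|a_n/n - \rho(F)| \le 1/n$; (iv) finally, replace $a_n = F^n(0)$ by $F^N(x) - x$ for arbitrary $x$ using (i) once more: $|(F^N(x) - x) - a_N| < 1$, but in fact one gets the cleaner bound $|\rho_N(x) - \rho| \le 1/N$ directly by running the argument of (iii) with the basepoint $x$ instead of $0$, since (i) and (ii) hold with any basepoint.

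The main obstacle is purely a matter of bookkeeping the strict-versus-weak inequalities: the oscillation bound in step (i) is \emph{strictly} less than $1$, and one must be careful that after the limiting argument in step (iii) the final inequality is the non-strict $\le 1/N$ claimed in the theorem (the limit of strict inequalities need only be weak). Beyond that, there is essentially no analytic difficulty — no regularity of $F$ is used beyond continuity and monotonicity, and the whole argument is a few lines once the uniform displacement bound (i) is in hand. I would therefore spend most of the write-up on a clean proof of (i) and treat (ii)–(iv) as short corollaries.
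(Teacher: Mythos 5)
Your proposal is correct, and it is a complete, standard argument for this classical estimate; the paper itself states Theorem~\ref{thm:linear} without proof, so there is nothing in the text to diverge from. Two small points you already handle correctly and should keep in the write-up: the first suggestion in step (iv), namely combining $|(F^N(x)-x)-a_N|<1$ with $|a_N/N-\rho|\le 1/N$, would only give the weaker bound $2/N$, so the final argument must indeed be run with basepoint $x$ throughout (using that the oscillation bound and near-additivity hold for any basepoint, and that $\rho$ is independent of the basepoint by Poincar\'e); and the passage from the strict bound $|a_{kN}-ka_N|<k$ to the non-strict $\le 1/N$ after letting $k\to\infty$ is exactly as you describe.
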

This method produces explicit bounds of the rotation number; its main drawback is that the convergence is only linear. So, given an initial point $x_0\in S^1$, and an (interval) enclosure $\iv{x}_N$ of $F^N(x_0)$, we can define $\iv{\rho}_N = \tfrac{\iv{x}_N - x_0}{N}$, which produces the bounds:
$$
\lo\rho_N - 1/N \le \rho \le \hi\rho_N + 1/N.
$$
Here, we denote the endpoints of an interval according to $\iv{x} = [\lo x, \hi x]$.

In \cite{MR1192060} the rotation number $\rho$ is constructed via the continued fraction expansion obtained from the topological behaviour of the circle map:
\begin{equation}\label{eq:contfrac}
\rho = \frac{1}{a_1 + \frac{1}{a_2+\frac{1}{a_3 + \frac{1}{\ddots}}}} = [0; a_1, a_2, \dots].
\end{equation}
The continued fraction coefficients $a_i$ depend on increasingly high iterates of the circle map, and -- from a numerical point of view - they are not easy to compute. Nevertheless, using this approach, it is possible to construct a sequence of intervals of rapidly decreasing lengths, all containing the rotation number $\rho$. In general the convergence is quadratic, and when the rotation number satisfies a Diophantine\footnote{A real number $\rho$ is Diophantine if there exist positive $B$ and $\gamma$ such that $|\rho -p/q|\ge b/q^{2+\gamma}$ for all rational numbers $p/q$.} condition, the convergence can be made cubic, see \cite{MR1192060}. Note that the set of rotation numbers satisfying a Diophantine condition has full measure in $[0, 1]$. On the other hand, the complement lies dense in $[0, 1]$.  In what follows, we will not assume any Diophantine properties of the rotation number.

Let $f$ be an orientation preserving homeomorphism on the circle $S^1 = \mathbb{R}/\mathbb{Z}$. Set $b_0 = f(0) \in (0,1)$, and define $I_0$ to be the smallest of the two intervals $[b_0, 1)$ and $(0, b_0]$. If $b_0 = 1/2$ there is a tie, and we set $I_0 = (0,1/2]$. Next, let $a_1$ be the \textit{first return time} of $f(b_0)$ to the interval $I_0$, i.e., the smallest positive integer such that $f^{a_1}(b_0) \in I_0$. If there is no such return, we set $a_1 = \infty$, and terminate the process. Otherwise we define $b_1 = f^{a_1}(0)$, and define $I_1$ to be the interval $[b_1, 1)$ or $(0, b_1]$ that is disjoint from $I_0$. This ends the initial stage of the general procedure for computing the continued fraction of $\rho$.

Continuing with the general inductive stage, we let $\{I_i\}$ be a sequence of half-open intervals of the form $[b_i, 1)$ or $(0, b_i]$, and let $\{\phi_i\}$, $i \ge 1$ denote the sequence of first return maps of $f$ from $I_i$ to $\overline{(I_i\bigcup I_{i-1})}$. We define $a_{i+1}$ to be the smallest positive integer such that $\phi_i^{a_{i+1} + 1}(b_i) \in I_i$. Given this return time, we define $b_{i+1} = \phi_i^{a_{i+1}}(b_i)$. As before, if the return map is not well-defined, we set $a_{i+1} = \infty$ and terminate the process. In parallell with this process we recursively define positive integers $p_i$ and $q_i$ as follows\footnote{We are assuming that the rotation number is less than $1/2$. For $\rho\in[1/2,1)$ the recursion has different initial conditions.}: 
\begin{eqnarray}\label{eq:pq}
& p_{-1} = 1,\; p_0 = 0;\qquad p_{i+1} = a_{i+1}p_i+p_{i-1},\\
& q_{-1} = 0,\; q_0 = 1;\qquad q_{i+1} = a_{i+1}q_i+q_{i-1}.\nonumber
\end{eqnarray}
These integer sequences are used in the following theorem which provides explicit bounds on the rotation number $\rho$.

\begin{thm}\label{thm:quadratic}{\it \cite{MR1192060}}
Let $N_i$ be the number of iterates needed to compute $q_i$. For the sequences defined above the following holds:

\begin{itemize}
\item[(a)] If $\rho$ is irrational, then $\frac{p_i}{q_i}$ converges to $\rho$ as $i \to \infty$. If $\rho$ is rational, then the process terminates ($a_{i+1} = \infty$) and the last estimate $\frac{p_i}{q_i} = \rho$.
\item[(b)] If $p_i$ and $q_i$ are found, then $\rho$ is contained in a closed interval $A$ with end-points $\frac{p_i}{q_i}$ and $\frac{(a+1)p_i + p_{i-1}}{(a+1)q_i + q_{i-1}}$, where the integer $a$ is a lower bound of $a_{i+1}$.
\item[(c)] $|A| \le 4/N_i^2$. For any $N_i\le N < N_{i+1}$, $|A| \le 2/(q_iN_i)$. If $\{a_i\}_{i\ge 1}$ satisfies the Diophantine condition $a_{i+1} < Bq_i^{\gamma}$ for some $B$ and $\gamma > 0$, then $|A| \le 2(B+2)^{1/(1 + \gamma)(1/N)^{1+1/(1+\gamma)}}$.
\end{itemize}
\end{thm}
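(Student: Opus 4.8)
I would prove the three parts in the order stated. For part (a), the key point is that the combinatorial data $\{b_i\}$, $\{I_i\}$, $\{a_i\}$ depend only on the cyclic order in which the forward orbit of $b_0$ under $f$ visits the arcs cut out by the earlier points, and, by Poincar\'e's theorem, this order coincides with the one for the rigid rotation $R_\rho$: when $\rho$ is irrational this is the statement that $f$ is semi-conjugate to $R_\rho$, and when $\rho=p/q$ one uses instead that every orbit of $f$ realises the same combinatorics as a $q$-periodic orbit of $R_\rho$. It therefore suffices to run the construction for $f=R_\rho$, where the first-return map of $R_\rho$ to a nested arc $I_i\cup I_{i-1}$ is again, after rescaling, a rotation whose rotation number is the image of the previous one under the Gauss map; hence the return times $a_i$ are precisely the partial quotients of $\rho=[0;a_1,a_2,\dots]$ and \eqref{eq:pq} is the standard recursion for its convergents. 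The rest of (a) is then classical continued-fraction theory: the convergents $p_i/q_i$ converge to $\rho$ when $\rho$ is irrational, and when $\rho=p/q$ the Euclidean algorithm terminates — which is exactly the event ``no return'', $a_{i+1}=\infty$ — with the last convergent equal to $\rho$.

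For part (b), introduce the M\"obius map $g(t)=\dfrac{t\,p_i+p_{i-1}}{t\,q_i+q_{i-1}}$. A one-line induction on \eqref{eq:pq} gives $p_iq_{i-1}-p_{i-1}q_i=\pm1$, so $g$ is strictly monotone in $t$; moreover $g(a_{i+1})=p_{i+1}/q_{i+1}$ and $g(t)\to p_i/q_i$ as $t\to\infty$. Standard continued-fraction theory writes $\rho=g(\beta_i)$ with $\beta_i=[a_{i+1};a_{i+2},\dots]\in(a_{i+1},a_{i+1}+1)$. If $a$ is a lower bound for $a_{i+1}$ detected before the return has occurred, then $a+1\le a_{i+1}<\beta_i$, so monotonicity of $g$ places $\rho=g(\beta_i)$ between $g(a+1)$ and $\lim_{t\to\infty}g(t)=p_i/q_i$; that is, $\rho\in A$.

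For part (c), the same determinant identity evaluates the length of $A$ exactly:
\[
|A|=\Bigl|\,g(a+1)-\tfrac{p_i}{q_i}\,\Bigr|=\frac{1}{q_i\bigl((a+1)q_i+q_{i-1}\bigr)}\le\frac{1}{q_i^{2}} .
\]
The remaining task is to relate the iterate count to the denominators. One shows by induction that the point $b_i$ is reached in $q_i$ iterates of $f$, and that a single application of an induced map $\phi_j$ costs $O(q_j)$ iterates of $f$ — the latter because $I_j\cup I_{j-1}$ has length comparable to $1/q_j$ and first-return times of $R_\rho$ to such an arc are controlled by the three-distance (Steinhaus) theorem. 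Hence $q_i\le N_i\le 2q_i$, and therefore $|A|\le 1/q_i^{2}\le 2/(q_iN_i)\le 4/N_i^{2}$; since $|A|$ can only shrink under further iteration, these bounds persist for all $N_i\le N<N_{i+1}$. Under the Diophantine hypothesis $a_{i+1}<Bq_i^{\gamma}$, the recursion \eqref{eq:pq} gives $q_{i+1}\le(B+1)q_i^{\,1+\gamma}$ and hence $q_i\gtrsim N^{1/(1+\gamma)}$; substituting this, together with the sharper lower bound $(a+1)q_i+q_{i-1}\gtrsim N-N_i$ valid once the search for $a_{i+1}$ has been running (each $\phi_i$-step costing $O(q_i)$ iterates), into the displayed formula for $|A|$ yields the asserted power-law bound $|A|=O\!\left(N^{-1-1/(1+\gamma)}\right)$, with the stated constant.

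The technical heart is the last paragraph. Via the semi-conjugacy to $R_\rho$, parts (a) and (b) are essentially repackagings of elementary facts about continued fractions; but upgrading the soft comparison $q_i\asymp N_i$ to the explicit inequalities carrying the constants $4$ and $2$ — and, in the Diophantine regime, extracting the correct exponent rather than the weaker $N^{-2/(1+\gamma)}$ — requires the careful bookkeeping of the return times of the nested induced maps $\phi_j$, and that is precisely where the three-distance theorem does the work.
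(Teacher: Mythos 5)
First, a caveat about the comparison itself: the paper does not prove Theorem~\ref{thm:quadratic} --- it is quoted from \cite{MR1192060} --- so your argument can only be judged on its own merits, not against an in-paper proof. Your overall strategy (order-equivalence with the rigid rotation, renormalization of first-return maps via the Gauss map to identify the $a_i$ with partial quotients, M\"obius/convergent estimates for $A$, and return-time bookkeeping for $N_i$) is the standard route and is essentially sound for irrational $\rho$. But there are two genuine gaps.

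The first concerns the rational case, which is exactly the case the enclosure must cover because the algorithm's user does not know in advance whether $\rho$ is rational. When $\rho=p/q$ there is in general no monotone degree-one semiconjugacy onto $R_\rho$, and ``every orbit of $f$ realises the same combinatorics as a $q$-periodic orbit of $R_\rho$'' cannot be literally true: an infinite non-periodic orbit is not order-isomorphic to a finite one; the $f$-orbit order only \emph{refines} the rotation order, and the way ties are resolved (from which side the orbit of $0$ approaches the attracting periodic orbit) is precisely what decides termination and which of the two continued-fraction representations of $p/q$ the algorithm produces. As a consequence, your proof of (b) --- which starts from ``$\rho=g(\beta_i)$ with $\beta_i=[a_{i+1};a_{i+2},\dots]$'' --- presupposes that the algorithmically produced $a_1,\dots,a_i$ are partial quotients of $\rho$, which is exactly what has not been established at a stage where rationality is still undecided; the argument is circular there. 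The robust fix is to prove (b) directly from the dynamics: the observed non-returns certify a definite sign of $F^{q}(0)-p$ for $(p,q)=(p_i,q_i)$ and $(p,q)=((a+1)p_i+p_{i-1},(a+1)q_i+q_{i-1})$, and the classical criterion ``$\rho(F)\ge p/q$ iff $F^{q}(x)\ge x+p$ for some $x$'' then traps $\rho$ in $A$ with no prior identification of the $a_j$; termination in (a) for rational $\rho$ then follows from the monotone convergence of the $f^{q_i}$-orbit to the periodic orbit. Note also that you silently use the strict reading $a_{i+1}\ge a+1$ of ``$a$ is a lower bound'': with the weak reading $a\le a_{i+1}$ the stated interval can fail to contain $\rho$ (take $a_{i+1}=a$ with a long tail, so $\beta_i<a+1$), so this reading must be made explicit and tied to ``no return observed during the first $a$ applications of $\phi_i$''.

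The second gap is quantitative, in (c). The explicit constants $4$ and $2$ hinge on $N_i\le 2q_i$, but you only assert that one application of $\phi_j$ costs $O(q_j)$ iterates; an unspecified constant cannot yield these factors. What is needed (and what the three-distance/renormalization structure does give) is the exact accounting $b_i=f^{q_i}(0)$ together with the fact that the return revealing $a_i$ occurs at iterate $q_i+q_{i-1}$, so that $N_i=q_i+q_{i-1}\le 2q_i$; your induction stops one step short of this. In the Diophantine estimate, the bound $(a+1)q_i+q_{i-1}\gtrsim N-N_i$ is vacuous for $N$ near $N_i$ and must be combined with $(a+1)q_i+q_{i-1}\ge N_i$ to obtain a bound of order $N/2$ on all of $[N_i,N_{i+1})$, while $q_i\gtrsim N^{1/(1+\gamma)}$ should be routed through $N<N_{i+1}\le(B+2)q_i^{1+\gamma}$ (up to the precise constant). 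Finally, the middle bound as printed, $|A|\le 2/(q_iN_i)$ for all $N_i\le N<N_{i+1}$, is presumably a typo for $2/(q_iN)$; if the intended statement does involve $N$, your ``the bounds persist as $N$ grows'' remark is not enough and the per-step cost accounting above is again required. These points are repairable, but as written the quantitative half of (c) is a plausible sketch rather than a proof.
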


We will use a version of case (b). First, let us begin by stating an obvious fact about continued fractions of rotation numbers.

\begin{lemma}\label{lemma:rotation}
Let $a_1, \dots, a_i$ be the leading elements of the continued fraction of the rotation number $\rho$:
$$
\rho = [0; a_1, a_2, \dots, a_i, \dots]
$$
then we have the enclosure $\rho \in [0;a_1, \dots, a_i+[0,1]]$, which is short-hand for the interval with lower endpoint $[0;a_1, \dots, a_i+1]$ and upper endpoint $[0;a_1, \dots, a_i]$.
\end{lemma}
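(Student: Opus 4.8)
The statement is the familiar fact that a continued fraction is monotone in its last partial quotient, and the plan is simply to make that precise. First I would use complete quotients to peel off the expansion of $\rho$ at level $i$: write $\rho=[0;a_1,\dots,a_i,\beta_{i+1}]$, where $\beta_{i+1}=[a_{i+1};a_{i+2},\dots]$ is the $(i+1)$-st complete quotient of $\rho$ (if the expansion of $\rho$ terminates at $a_i$, then $\rho=[0;a_1,\dots,a_i]$ is already one of the two endpoints and there is nothing to prove). Since $a_{i+1}\ge1$ and the remaining tail is non-negative we have $\beta_{i+1}\ge1$, so $1/\beta_{i+1}\in(0,1]$; collapsing the last two levels of the fraction then gives $\rho=[0;a_1,\dots,a_{i-1},\,a_i+1/\beta_{i+1}]$. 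In other words $\rho=g(a_i+1/\beta_{i+1})$ with $a_i+1/\beta_{i+1}\in[a_i,a_i+1]$, where $g(t):=[0;a_1,\dots,a_{i-1},t]$.

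The key step is the monotonicity of $g$ on $[a_i,a_i+1]$. From the recursion \eqref{eq:pq} one obtains the M\"obius representation $g(t)=\dfrac{t\,p_{i-1}+p_{i-2}}{t\,q_{i-1}+q_{i-2}}$, and for $t>0$ the denominator is positive because $q_{i-1}\ge1$ and $q_{i-2}\ge0$, so $g$ is continuous and strictly monotone on $[a_i,a_i+1]$; indeed $g'(t)=\dfrac{p_{i-1}q_{i-2}-p_{i-2}q_{i-1}}{(t\,q_{i-1}+q_{i-2})^2}=\dfrac{(-1)^{i}}{(t\,q_{i-1}+q_{i-2})^2}$ by the determinant identity $p_jq_{j-1}-p_{j-1}q_j=(-1)^{j-1}$, which is itself an immediate induction on \eqref{eq:pq}. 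Hence $g$ maps $[a_i,a_i+1]$ bijectively onto the closed interval with endpoints $g(a_i)=[0;a_1,\dots,a_i]$ and $g(a_i+1)=[0;a_1,\dots,a_i+1]$; since $a_i+1/\beta_{i+1}$ lies in $[a_i,a_i+1]$, its image $\rho$ lies in that interval, which is exactly the asserted enclosure $\rho\in[0;a_1,\dots,a_i+[0,1]]$.

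The one point needing a little care --- and the closest thing to an obstacle here --- is the labelling of the two endpoints as ``lower'' and ``upper''. This is governed by the sign $(-1)^i$ of $g'$: $g$ is decreasing, so that $[0;a_1,\dots,a_i+1]$ is the lower endpoint as stated, precisely when $i$ is odd, whereas for even $i$ the two endpoints exchange roles; the enclosing interval itself is of course unaffected. So the only real work is to keep this parity bookkeeping consistent with the conventions underlying \eqref{eq:pq}. One could also avoid the issue entirely by appealing to the classical facts that the convergents $p_j/q_j$ lie alternately on the two sides of $\rho$ and that the intermediate fractions $(kp_i+p_{i-1})/(kq_i+q_{i-1})$ are monotone in $k\ge0$; together these place $\rho$ between $p_i/q_i=[0;a_1,\dots,a_i]$ and $(p_i+p_{i-1})/(q_i+q_{i-1})=[0;a_1,\dots,a_i+1]$.
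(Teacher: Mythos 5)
Your proof is correct, and at bottom it rests on the same idea as the paper's two-sentence argument: the value of $[0;a_1,\dots,a_i,\mathrm{tail}]$ depends monotonically on the tail, and the extreme tails are $a_{i+1}=\infty$ (tail equal to $0$) and $a_{i+1}=1,\ a_{i+2}=\infty$ (tail equal to $1$, since $[0;a_1,\dots,a_i,1]=[0;a_1,\dots,a_i+1]$). The difference is that the paper leaves the monotonicity entirely implicit and merely names these two extremal configurations, whereas you actually prove it, via the M\"obius representation $g(t)=\frac{tp_{i-1}+p_{i-2}}{tq_{i-1}+q_{i-2}}$ and the determinant identity $p_jq_{j-1}-p_{j-1}q_j=(-1)^{j-1}$ coming from \eqref{eq:pq}; that is precisely the step the paper's proof takes for granted, so your version is a completed form of the same argument rather than a different route (your closing alternative via alternating convergents and intermediate fractions would be a second legitimate route). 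Your parity caveat is also well taken: since $g'$ has sign $(-1)^i$, the labelling of $[0;a_1,\dots,a_i+1]$ as the lower endpoint and $[0;a_1,\dots,a_i]$ as the upper one --- and hence the paper's matching of the upper endpoint to $a_{i+1}=\infty$ and the lower to $a_{i+1}=1,\ a_{i+2}=\infty$ --- is literally correct only for odd $i$; for even $i$ the endpoints trade places, although the enclosing interval itself, which is all the lemma is used for, is unaffected.
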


\begin{proof}
Recall that all $a_i$ are positive integers by construction. The upper endpoint $[0;a_1, \dots, a_i]$ corresponds to the case $a_{i+1} = \infty$. The lower endpoint is obtained when $a_{i+1} = 1$ and $a_{i+2} = \infty$.
\end{proof}

\begin{rmk}
If we knew that the rotation number was irrational, we would get a tighter bound: $\rho \in [0;a_1, \dots, a_i+[0,\sigma]]$, where $\sigma = (\sqrt{5} - 1)/2 \approx 0.6180339887$ is the \textit{golden ratio conjugate}. Its continued fraction is $\sigma = [0;1,1,\dots]$.
\end{rmk}

By using the procedure of Theorem~\ref{thm:quadratic}, it is clear that we need a means to compute very high iterates of the circle map $f$. This is needed in order to compute the return maps $\phi_i$, which in turn are used to determine the return times $a_i$ appearing in the recursions (\ref{eq:pq}), and in the continued fraction of $\rho$. But computing high iterates of a non-linear dynamical system is not straight-forward. We will address this issue in the following section.

\section{The interval Newton method and shooting techniques}\label{sec:Newton} 

As already mentioned, the described algorithms all require long pieces of trajectories in order to produce good approximations (and bounds) on the rotation number. Naturally, this is problematic as it is not always clear how to control the propagation of numerical errors in such computations. Therefore we need a reliable and accurate method to compute long trajectories. A common solution to this problem is to use multi-precision numerical software. This will allow for longer accurate trajectories, but the penalty is high: a decrease in speed by a factor 200-1000 is not uncommon. We will illustrate this in Section~\ref{sec:numerics}.

Sticking to standard floating point computations, we will use a version of the shooting method described in {\cite{MR1926132}} but modified to suit our particular set-up. This method is based on a combination of the interval Newton method and multiple shooting.

The interval Newton method is a constructive implementation of the bounds required in Kantorovic's theorem in order to guarantee the convergence of Newton's method. As such, it can be used to prove the existence (or non-existence) of zeros of general $n$-dimensional maps. 
Let $g: \mathbb{R}^n \to \mathbb{R}^n$ be a continuously differentiable function, and suppose that we have an interval extension of its derivative, i.e., given an $n$-dimensional interval variable $\iv{z} \subset \mathbb{R}^n$, that is $\iv{z} = (\iv{x_1}, \cdots, \iv{x_n})$, we can compute the interval image $Dg(\iv{z})\supseteq \{Dg(z)\colon z\in \iv{z}\}$.

To verify the existence of zeros of $g$, we introduce the interval Newton operator
\begin{equation}\label{eq:interval_newton}
N_g(\iv{z}) = \check{z} - [Dg(\iv{z})]^{-1}g(\check{z}), 
\end{equation}
where $[Dg(\iv{z})]$ is an interval matrix containing all Jacobian matrices of $g$ of the form $Dg(z)$ for $z \in \iv{z}$. Here $\check{z}$ is an arbitrary point from the interval vector $\iv{z}$ usually chosen to be the middle point of $\iv{z}$. The operator $(\ref{eq:interval_newton})$ possesses the following key properties:
\begin{thm}\label{thm:ivNewton}{\it \cite{moore1966interval}}
\begin{itemize}
\item[$1)$] If $N_g(\iv{z}) \subset \iv{z}$ then there exists exactly one point $z^\star \in \iv{z}$ such that $g(z^\star) = 0$. 
\item[$2)$] If $N_g(\iv{z}) \cap \iv{z} = \emptyset$ then there are no zeros of $g$ in $\iv{z}$.
\end{itemize}
\end{thm}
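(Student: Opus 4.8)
Both parts of the statement rest on two elementary ingredients, which I would set up first. The first is the mean value theorem written coordinatewise: for any $z \in \iv{z}$ there is a matrix $M \in [Dg(\iv{z})]$ with $g(z) - g(\check{z}) = M(z - \check{z})$, the $i$-th row of $M$ being $\nabla g_i$ evaluated at some point of the segment from $\check{z}$ to $z$; this is legitimate because $\iv{z}$ is a box, hence convex, so the segment stays in $\iv{z}$. The second is the inclusion property of interval arithmetic, which guarantees $\check{z} - M^{-1}g(\check{z}) \in N_g(\iv{z})$ for \emph{every} $M \in [Dg(\iv{z})]$. Throughout I would also use that $[Dg(\iv{z})]$ contains no singular matrix --- this holds automatically whenever the operator $N_g(\iv{z})$ is actually evaluated (interval Gaussian elimination would otherwise break down), and in particular under either hypothesis of the theorem.

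With these in hand, part $2)$ and the uniqueness half of $1)$ are immediate. If $z^\star \in \iv{z}$ with $g(z^\star) = 0$, the mean value identity gives $-g(\check{z}) = M(z^\star - \check{z})$ with $M \in [Dg(\iv{z})]$ invertible, so $z^\star = \check{z} - M^{-1}g(\check{z}) \in N_g(\iv{z})$; then $z^\star \in N_g(\iv{z}) \cap \iv{z}$, contradicting the hypothesis of $2)$, and incidentally showing that any zero in $\iv{z}$ must lie in $N_g(\iv{z})$. For uniqueness, two zeros $z_1, z_2 \in \iv{z}$ would give $0 = g(z_1) - g(z_2) = M(z_1 - z_2)$ with $M$ invertible, whence $z_1 = z_2$.

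The real work is the existence half of $1)$, and for this the plan is a continuation argument. I would introduce the homotopy $g_t(z) := g(z) - (1-t)g(\check{z})$, $t \in [0,1]$, so that $g_0(z) = g(z) - g(\check{z})$ vanishes at $z = \check{z}$ while $g_1 = g$. The key step is to show that no $g_t$ has a zero on $\partial \iv{z}$: if $g_t(z) = 0$, the mean value identity gives $-t\,g(\check{z}) = M(z - \check{z})$, hence $z - \check{z} = -t\,M^{-1}g(\check{z})$, and by the inclusion property $-M^{-1}g(\check{z}) \in N_g(\iv{z}) - \check{z}$. Since $N_g(\iv{z})$ lies in the interior of $\iv{z}$, this vector lies strictly inside the box $\iv{z} - \check{z}$, which contains the origin; and a box about the origin is invariant under scaling by any factor in $[0,1]$, so $z - \check{z} = -t\,M^{-1}g(\check{z}) \in \mathrm{int}(\iv{z} - \check{z})$, that is, $z \in \mathrm{int}\,\iv{z}$. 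Consequently the Brouwer degree $\deg(g_t, \mathrm{int}\,\iv{z}, 0)$ is defined for every $t$ and, by homotopy invariance, independent of $t$. Since $\check{z}$ is the unique zero of $g_0$ in $\iv{z}$ (the uniqueness argument above applied to $g_0$) and is nondegenerate --- $Dg_0(\check{z}) = Dg(\check{z}) \in [Dg(\iv{z})]$ is invertible --- I obtain $\deg(g, \mathrm{int}\,\iv{z}, 0) = \deg(g_0, \mathrm{int}\,\iv{z}, 0) = \mathrm{sgn}\,\det Dg(\check{z}) \neq 0$, so $g$ has a zero in $\iv{z}$, which by uniqueness is the only one.

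I expect the boundary-avoidance step to be the single genuine obstacle: it is precisely where the hypothesis $N_g(\iv{z}) \subset \iv{z}$ is used, together with the elementary observation that an origin-containing box is invariant under contraction towards the origin. The degree-theoretic wrap-up, and all of part $2)$ and uniqueness, are routine consequences of the mean value theorem and the inclusion monotonicity of interval arithmetic; in dimension one the existence argument collapses entirely to the intermediate value theorem.
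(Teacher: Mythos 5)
The paper does not prove this statement at all: Theorem~\ref{thm:ivNewton} is quoted verbatim from Moore's book \cite{moore1966interval}, so there is no internal proof to compare against. Judged on its own, your argument is essentially correct and is a legitimate self-contained proof: the coordinatewise mean-value form $g(z)-g(\check z)=M(z-\check z)$ with the mixed-row matrix $M$ lying in the entrywise interval matrix $[Dg(\iv{z})]$, plus the inclusion property $\check z-M^{-1}g(\check z)\in N_g(\iv{z})$, does give part~$2)$ and uniqueness immediately, and your homotopy $g_t(z)=g(z)-(1-t)g(\check z)$ with the scaling-invariance of the box $\iv{z}-\check z$ is a clean way to get existence via Brouwer degree. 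The classical proofs in the interval literature reach existence differently --- typically through Miranda's theorem or Brouwer's fixed point theorem applied to the mean-value (Krawczyk-type) form --- so your degree-theoretic route is a genuine alternative of comparable length; in dimension one, as you note, it collapses to the intermediate value theorem.

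Two small caveats you should make explicit. First, you read the hypothesis $N_g(\iv{z})\subset\iv{z}$ as containment in the \emph{interior}; the theorem is usually stated (and used in this paper, cf.\ Section~\ref{sec:Newton}) with mere containment $N_g(\iv{z})\subseteq\iv{z}$, for which your boundary-avoidance step does not directly apply and needs a limiting or perturbation argument (or Miranda's theorem, which tolerates boundary equalities). Second, at $t=0$ your argument needs the unique zero $\check z$ of $g_0$ to lie in $\mathrm{int}\,\iv{z}$: the scaling argument gives interiority only for $t\in(0,1]$, and the paper allows $\check z$ to be an arbitrary point of $\iv{z}$, so you should either invoke the usual midpoint choice or rule out the case $\check z\in\partial\iv{z}$ (e.g.\ by treating $g(\check z)=0$ separately and otherwise perturbing the base point). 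Both are routine patches, not structural flaws. You also implicitly assume regularity of $[Dg(\iv{z})]$; flagging that this is part of "$N_g(\iv{z})$ is well defined" is fine, but it deserves a sentence rather than a parenthesis.
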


Let $\iv{z}_0 = \iv{z}$ be the initial enclosure of a possible zero of $g$, and define the sequence of intervals $\iv{z}_{k+1} = N_g(\iv{z}_k) \cap \iv{z}_k$, $k = 0,1,2,\dots$. If a true zero $z^\star$ is contained in $\iv{z}_0$, and if the interval Newton operator is well-defined on this domain, then the operator remains well-defined for all iterations, we have $z^\star\in\iv{z}_k$, and the intervals $\iv{z}_k$ form a nested sequence converging to the zero of $g$.

By applying the interval Newton operator to $F$ -- a high-dimensional variant of $f$ based on the shooting technique -- we can efficiently recast the problem of tracking long orbits of $f$ into one about finding zeros of $F$. To be more precise: let $f: S^1 \to S^1$ be a circle map as defined in Section $\ref{sec:Algorithms}$. In order to compute the $n$ iterates $x_1, x_2, \dots ,x_n$, where $x_k = f^k(x_0)$, we apply the interval Newton operator to the $n$-dimensional map $F(\cdot;x_0)\colon (S^1)^n\to(S^1)^n$ defined as follows:
\begin{eqnarray}
\nonumber F_1(z;x_0) &=& f(x_0) - x_1,\\
\nonumber F_2(z;x_0) &=& f(x_1) - x_2 \\
\nonumber & \cdots & \\
\nonumber F_n(z;x_0) &=& f(x_{n-1}) - x_n, 
\end{eqnarray}

where $z = (x_1, \cdots, x_n)$ and $x_0$ is the initial point of the orbit.

Note that the Jacobian matrix of $F(\cdot;x_0)$ at $z = (x_1, \cdots, x_n)$ has the following -- very sparse -- form:
\begin{equation}
\nonumber
DF(z;x_0) = \begin{pmatrix}
1 & 0 & 0 & \cdots & 0 \\
-f'(x_1) & 1 & 0 & \cdots & 0  \\
0 & -f'(x_2) & 1 & \cdots & 0  \\        
\vdots & \vdots & \ddots & \vdots & \vdots \\
0 & 0 & \cdots & -f'(x_{n-1}) & 1
\end{pmatrix}
\end{equation} 

To get a rigorous enclosure of a finite length trajectory starting at $x_0$, we begin by computing an approximate trajectory $z = (\tilde x_1, \cdots, \tilde x_n)$. We then consider a neighbourhood of this piece of trajectory by widening each component into an interval, producing the box $\iv{z} = (\iv{x_1}, \cdots, \iv{x_n})$. Next, if we can show that the interval Newton operator
\begin{equation}
\nonumber N_F(\iv{z};x_0) = \check{z} - DF(\iv{z};x_0)^{-1}F(\check{z};x_0) 
\end{equation}
maps this interval vector into itself $N_F(\iv{z};x_0)\subset \iv{z}$, then we have a true trajectory enclosed in $\iv{z}$. By reapplying the interval Newton operator a few times, we can tighten the enclosure of the trajectory as much as we want. This gives us very high-quality trajectories, starting from only coarse approximate trajectories.

The computation of the interval matrix inverse can cause difficulties for large $n$. Fortunately, the explicit inverse is not needed; we can simply solve for the correction term $\iv{h} = DF(\iv{z};x_0)^{-1}F(\check{z};x_0)$. This equation can be recast as
\begin{equation}
\nonumber DF(\iv{z};x_0)\iv{h} = F(\check{z};x_0), 
\end{equation}
where $\check{z} = (x_1, \cdots, x_n)$, $\iv{z} = (\iv{x_1}, \cdots, \iv{x_n})$ and $\iv{h} = (\iv{h}_1, \cdots, \iv{h}_n)$.

We define all $\iv{h}_2, \cdots, \iv{h}_n$ recursively in the following way
\begin{eqnarray*}
\iv{h}_1 &=& f(x_0) - x_1,\\
\iv{h}_2 &=& f(x_1) - x_2 + f'(\iv{x}_1)\iv{h}_1,\\
& \cdots & \\
\iv{h}_n &=& f(x_{n-1}) - x_n + f'(\iv{x}_{n-1})\iv{h}_{n-1}. 
\end{eqnarray*}
On completion of these computation, we can evaluate the interval Newton operator as $N_F(\iv{z};x_0) = \check{z} - \iv{h}$. This concludes the description of how to compute long, yet highly accurate (in fact rigorous), iterates of the circle map $f$.

\section{Verification of the rationality of the rotation number}\label{sec:rationality} 

Recall from part (a) of Theorem~\ref{thm:quadratic} that if $\rho$ is rational, then the process terminates ($a_{i+1} = \infty$) and the last estimate $\frac{p_i}{q_i} = \rho$. In fact, this means that the circle map $f$ has a stable period-$q_i$ point, and all forward trajectories tend to the corresponding periodic orbit\footnote{The periodic point is attracting.}. With this in mind, we always search for a period-$q_i$ point of the circle map before attempting to determine the next return time $a_{i+1}$.

Again we use the interval Newton method to verify the existence of a periodic point. The line of argument is very similar to that of the previous section. We first try to locate a good candidate by iterating a random point many times, producing the finite orbit $x_0, x_1, \dots, x_N$. Here $N$ is taken much larger than the sought period $q_i$. If there is a period orbit, these iterates should accumulate on it, which means that $x_{N+1}, x_{N+2},\dots, x_{N+q_i}$ should form a good approximation of the periodic orbit. Widening this finite set of points into a $q_i$-dimensional box, and verifying that this box is mapped into itself under the interval Newton map, established the existence of a period-$q_i$ point. This -- in turn -- means that $a_{i+1} = \infty$, and that the rotation number is rational: $\rho = \frac{p_i}{q_i}$.

\section{Numerical experiments}\label{sec:numerics}

In this section we present numerical results from our implementations of both described algorithms: the linear one (based on Theorem~\ref{thm:linear}) and the quadratic one (based on Theorem~\ref{thm:quadratic}). We base our code on the CAPD interval library \cite{CAPD}.  For all computations performed, we list the timings in seconds using a single thread on an Intel(R) Core(TM) i7 CPU $920$ @ $2.67$GHz.

\subsection{The Arnold family}

We consider the two-parameter Arnold family of circle maps given by
\begin{equation}\label{eq:Arnold}
\nonumber f_{\alpha, \epsilon}(x) = x + \alpha - \epsilon\sin(2\pi x) (\mbox{mod }1).
\end{equation}
The map $f_{\alpha, \epsilon}(x)$ with $2\pi |\epsilon|\le 1$ is an orientation-preserving analytic circle diffeomorphism for every pair of parameters $(\alpha, \epsilon)$. The set of parameters $(\alpha, \epsilon)$ corresponding to a fixed irrational $\rho$ forms a continuous curve known to be analytic when $\rho$ is a Diophantine number; for the rational $\rho$ the set has an non-empty interior, and formes the well-known \textit{the Arnold tongue} of the rotation number $\rho$ \cite{Seara2006107, MR1239171}.

Table~\ref{results1} displays enclosures of the rotation number when computed according to Theorem~\ref{thm:linear} using set-valued computations for different pairs of parameters of the Arnold map. We list the final enclosure of the rotation number, its radius, and the number of iterations of the map $f$. In order to save space, and to highlight the significant digits, we use a compact notation for intervals: for example, $0.2_{18}^{21}$ means the interval $[0.218, 0.221]$. As expected, our numerical results indicate that this method gives only linear convergence.

\begin{table}[h]
\centering
\begin{tabular}{l||l|l||l|l}
   $N$    & $\ivrho$           & rad($\ivrho$)       & $\ivrho$            &  rad($\ivrho$)      \\
\hline
\hline
\multicolumn{1}{c||}{} & \multicolumn{1}{c}{$\alpha = 0.22$} & \multicolumn{1}{c||}{$\epsilon = 0.01$} & \multicolumn{1}{c}{$\alpha = 0.45$ } & \multicolumn{1}{c}{$\epsilon = 0.01$} \\ 
\hline \hline
1000  & $0.2_{18}^{21}$    & $1.0\times 10^{-3}$  & $0.4_{49}^{51}$     & $1.0\times 10^{-3}$ \\
5000  & $0.2_{19}^{20}$    & $2.0\times 10^{-4}$  & $0.4_{498}^{502}$   & $2.0\times 10^{-4}$ \\
10000 & $0.219_{71}^{91}$  & $1.0\times 10^{-4}$  & $0.4_{4987}^{5007}$ & $1.0\times 10^{-4}$ \\
15000 & $0.219_{74}^{88}$  & $6.7\times 10^{-5}$  & $0.4_{4991}^{5004}$ & $6.7\times 10^{-5}$ \\
\hline\hline
\multicolumn{1}{c||}{} & \multicolumn{1}{c}{$\alpha = 0.22$} & \multicolumn{1}{c||}{$\epsilon = 0.159$} & \multicolumn{1}{c}{$\alpha = 0.45$ } & \multicolumn{1}{c}{$\epsilon = 0.159$} \\
\hline\hline
1000  & $0.16_{13}^{33}$   & $1.0\times 10^{-3}$  & $0.46_{03}^{23}$    & $1.0\times 10^{-3}$ \\
5000  & $0.162_{24}^{64}$  & $2.0\times 10^{-4}$  & $0.46_{09}^{13}$    & $2.0\times 10^{-4}$ \\
10000 & $0.162_{33}^{53}$  & $1.0\times 10^{-4}$  & $0.461_{098}^{118}$ & $1.0\times 10^{-4}$ \\
15000 & $0.162_{36}^{49}$  & $6.7\times 10^{-5}$  & $0.461_{03}^{16}$   & $6.7\times 10^{-5}$ \\
\end{tabular}
\caption{Results for the Arnold map; linear convergence.}
\label{results1}
\end{table}

\begin{table}[h]
\centering
\begin{tabular}{l||l|l||l|l}
$N$ & prec (bits) & $T$ (sec) & prec (bits) & $T$ (sec) \\ 
\hline
\hline
      & $\alpha = 0.22$    & $\epsilon = 0.01$    & $\alpha = 0.45$     & $\epsilon = 0.01$   \\ 
\hline
1000  & 1000 & 0.06  & 1000   & 0.05  \\
5000  & 1000 & 0.27  & 1000   & 0.29	  \\
10000 & 1000 & 0.56  & 1000   & 0.56  \\
15000 & 1000 & 0.84  & 1000   & 0.83 \\
\hline\hline
      & $\alpha = 0.22$    & $\epsilon = 0.159$   & $\alpha = 0.45$     & $\epsilon = 0.159$  \\ 
\hline
1000  & 9500 & 1.83  & 10500  & 2.25 \\
5000  & 9500 & 9.20 & 10500   & 11.24 \\
10000 & 9500 & 18.42 & 10500  & 22.34 \\
15000 & 9500 & 27.82 & 10500  & 33.82 \\
\end{tabular}
\caption{Timings in seconds for the algorithm with linear convergence for the Arnold map.}
\label{results1timings}
\end{table}
\FloatBarrier

Enclosures of the rotation number obtained using our algorithm (Lemma~\ref{lemma:rotation}) are given in Table~\ref{results2}. Here the convergence is at least quadratic, and the use of interval arithmetic allows us to get a much better enclosure already for a small number of iterations.

\begin{table}[h]
\centering
\small
\begin{tabular}{l|l|l||l|l|l}
    & $\ivrho$                     & rad($\ivrho$)         &     & $\ivrho$                     & rad($\ivrho$)        \\
\hline\hline
$i$ &  $\alpha = 0.22$             & $\epsilon = 0.01$     & $i$ & $\alpha = 0.45$              & $\epsilon = 0.01$    \\  
\hline
4   & $0.2_{195}^{200}$            & $2.4\times 10^{-4}$   & 3   & $0.4_{48}^{50}$              & $8.6\times 10^{-4}$  \\
6   & $0.219_{78}^{86}$            & $3.9\times 10^{-5}$   & 4   & $0.449975_{11}^{36}$         & $1.2\times 10^{-7}$  \\
8   & $0.2198_{07}^{10}$           & $1.6\times 10^{-6}$   & 4   & $0.449975_{24}^{36}$         & $6.1\times 10^{-8}$ \\
9   & $0.2198099931_{17}^{39}$     & $1.1\times 10^{-11}$  & 5   & $0.4499753532_{01}^{70}$     & $3.4\times 10^{-11}$ \\
\hline\hline
$i$ & $\alpha = 0.22$              & $\epsilon = 0.159$   & $i$ &  $\alpha = 0.45$              & $\epsilon = 0.159$   \\ 
\hline
3   & $0.162_{39}^{50}$            & $5.0\times 10^{-5}$   & 3   & $0.4_{58}^{62}$               & $1.6\times 10^{-3}$ \\
5   & $0.1624_{20}^{37}$           & $8.1\times 10^{-6}$   & 4   & $0.461_{08}^{11}$             & $1.7\times 10^{-5}$ \\
6   & $0.16242_{66}^{94}$          & $1.4\times 10^{-6}$   & 5   & $0.46108_{49}^{66}$           & $8.7\times 10^{-7}$  \\
5   & $0.16242_{82}^{94}$          & $5.8\times 10^{-7}$   & 6   & $0.4610864_{46}^{61}$         & $7.8\times 10^{-9}$  \\
\end{tabular}
\caption{Results for the Arnold map; quadratic convergence. Here $i$ denotes the number of coefficients of the continued fraction expansion.}
\label{results2}
\end{table}

\begin{table}[h]
\centering
\small
\begin{tabular}{l|l|l}
$\alpha$ & $\epsilon$ & $T$ (sec)     \\ 
\hline
0.22     & 0.01       & $2.99$        \\
0.22     & 0.159      & $2.57$        \\
0.45     & 0.01       & $40.88$ \\
0.45     & 0.159      & $1.82$        \\
\end{tabular}
\caption{Timings in seconds for the algorithm with quadratic convergence for the Arnold map. The large timing for $(\alpha, \epsilon) = (0.45, 0.01)$ is due to a large coefficient ($a_4 = 100$) appearing in the continued fraction of the rotation number.}
\label{results2timings}
\end{table}

Using the interval Newton method as described in Section~\ref{sec:Newton} we can often verify if the rotation number is rational. For the cases shown in Table~\ref{results2} no periodic points were detected, and so the results encloses a (possibly) irrational rotation number. Similar computations were performed for other pairs of parameters $(\alpha, \epsilon)$ given in Table~\ref{results3}. These parameters produce rational rotation numbers, and thus we can provide an exact representation of the form $\rho = \frac{p}{q}$, where $p$ and $q$ correspond to the integers $p_i$, $q_i$ in Theorem~\ref{thm:quadratic}. Note that here $q$ is the period of the periodic point.

\begin{table}[h]
\centering
\begin{tabular}{l|l|l|l|l|l}
$\alpha$  & $\epsilon$     & $\rho$                    & $p/q$       & $x^\star(\alpha, \epsilon)$ & $T$ (sec) \\  
\hline
$0.22$    & $0.159154943$  & $0.\overline{162}$        & $6/37$      & $0.013173605$	& 0.62 \\  
$0.2$     & $0.159$        & $0.125683060$             & $23/183$    & $0.012343856$	& 0.95 \\  
$0.21$    & $0.159$        & $0.142857143$             & $1/7$       & $0.037952001$	& 0.61 \\  
$0.43$    & $0.159154943$  & $0.\overline{428571}$     & $3/7$       & $0.021337727$	& 0.61 \\  
$0.21$    & $0.15$         & $0.153846154$             & $2/13$      & $0.005169938$	& 0.23 \\  
$0.16$    & $0.1589$       & $0.018957346$      	      & $4/211$     & $0.856340865$	& 0.78 \\  
\end{tabular}
\caption{Rational rotation numbers for the Arnold map, and their corresponding periodic points $x^\star(\alpha, \epsilon)$.}
\label{results3}
\end{table}

We end this example by computing rotation numbers for many different parameters. This produces the well-known \textit{devil's staircase} graph, illustrating the frequency locking phenomena. To be concrete, we will take $\epsilon \in \{0.01, 0.1, 0.159\}$, and for each value of $\epsilon$ we compute the enclosure of the rotation number $\rho$ for 1000 different values of $\alpha$. The results are illustrated in Figure~\ref{fig:staircases}.

\begin{figure}[!ht]
\begin{center}
\includegraphics[width=0.7\linewidth]{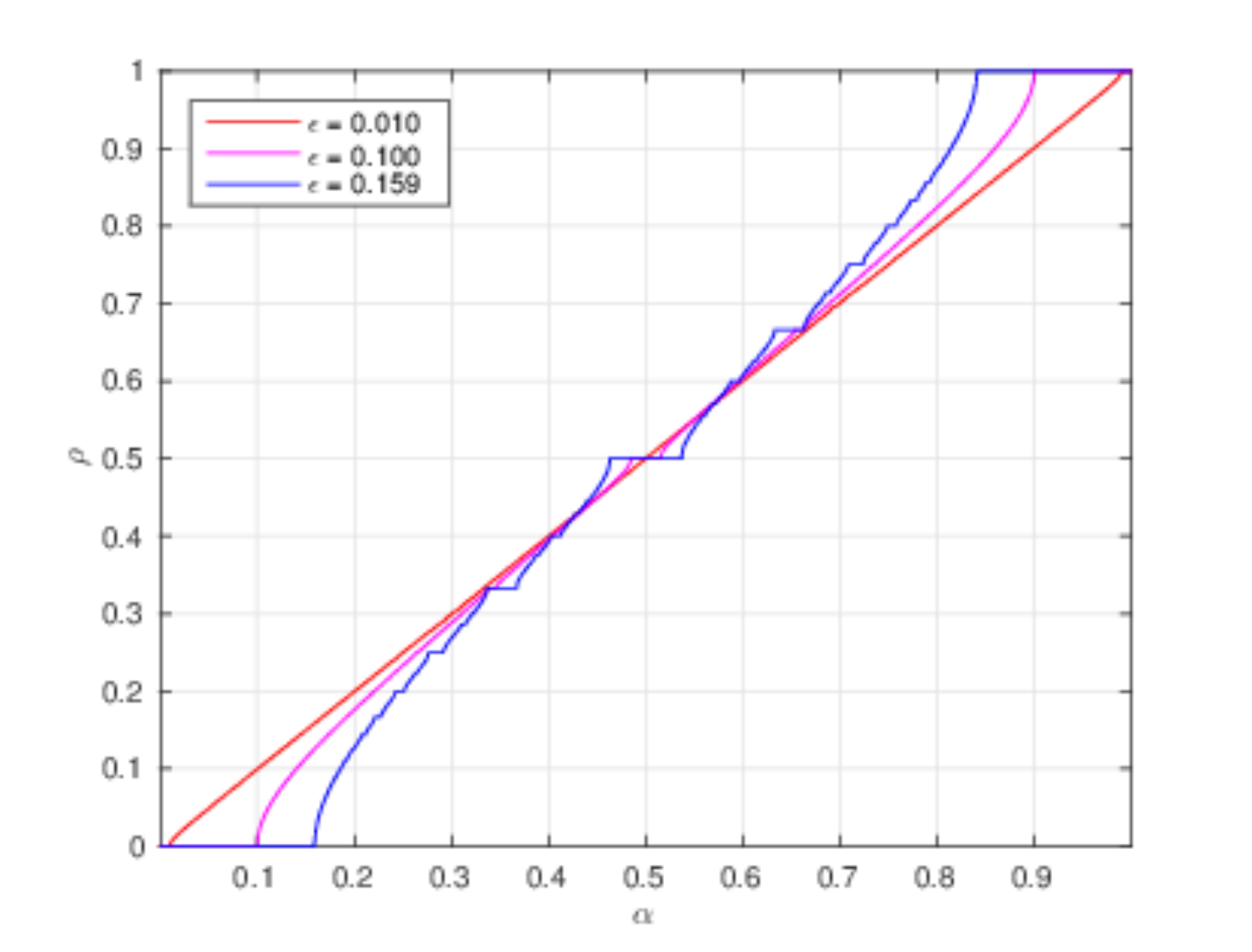}
\caption{\small{The devil's staircase for $\epsilon \in \{0.01, 0.1, 0.159\}$.}} \label{fig:staircases}
\end{center}
\end{figure}

\begin{figure}[!ht]
\begin{center}
\includegraphics[width=0.48\linewidth]{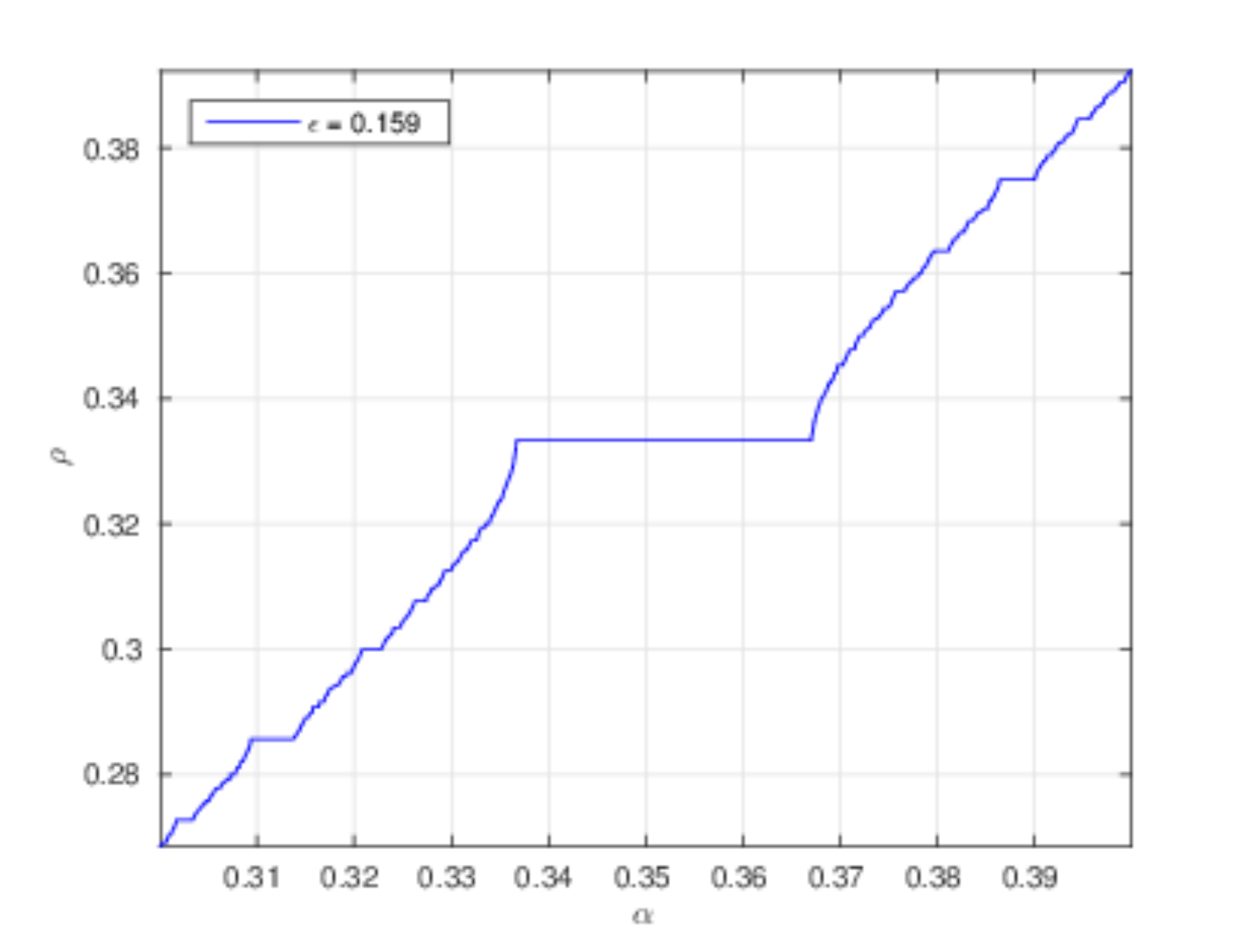}
\includegraphics[width=0.48\linewidth]{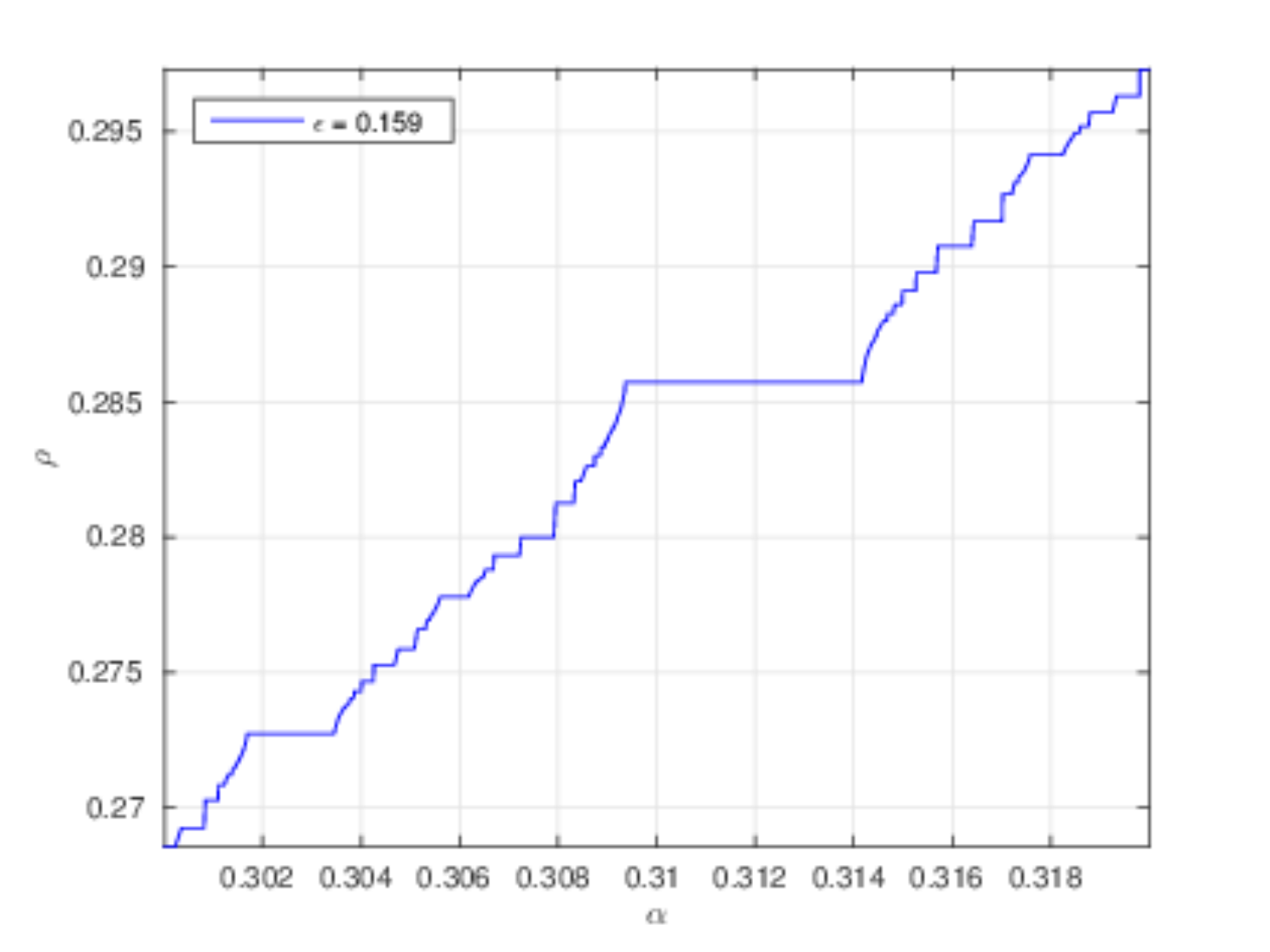}
\caption{\small{Successive zooms of the devil's staircase for $\epsilon = 0.159$.}} \label{fig:zooms}
\end{center}
\end{figure}

\FloatBarrier
\subsection{The delayed logistic map}

Similar computations were carried out for the delayed logistic map, defined by
\begin{equation}\label{eq:DLM}
\nonumber \phi \binom{x}{y} = \binom{y}{\lambda y(1-x)}.
\end{equation}

The approximation of the rotation number for this map has been considered in \cite{VanVeldhuizen1988203}. It is believed that for $2 < \lambda \le 2.16$ the map $\phi$ possesses a smooth invariant curve. In order to prove the existence of an invariant curve for the delayed logistic map, one may adopt the method described in {\cite{MR2947932}}, where a topological approach to prove the existence of a normally hyperbolic manifold for maps is introduced. As this is not the focus of our work, we will simply assume that the invariant curve exists for the parameters we are considering.

\begin{table}[H]
\centering
\begin{tabular}{l|l|l|l|l}
$\lambda$ & $\rho_{V}$  & $\ivrho$             & rad($\ivrho$)       & $T$ (sec) \\  
 \hline
 $2.04$   & $0.1628037$ & $0.16_{28}^{33}$     & $2.4\times 10^{-4}$ & 0.003  \\
 $2.06$   & $0.1607109$ & $0.160_{484}^{714}$  & $1.1\times 10^{-4}$ & 0.004  \\
 $2.08$   & $0.1584864$ & $0.158_{42}^{54}$    & $6.0\times 10^{-5}$ & 0.005  \\
 $2.10$   & $0.1561058$ & $0.15_{56}^{63}$     & $3.6\times 10^{-4}$ & 0.005  \\
 $2.12$   & $0.1535363$ & $0.1535_{27}^{43}$   & $8.0\times 10^{-6}$ & 0.005  \\
 $2.14$   & $0.1507185$ & $0.150_{68}^{94}$    & $1.3\times 10^{-4}$ & 0.005  \\
 $2.16$   & $0.1474935$ & $0.147_{37}^{54}$    & $8.6\times 10^{-5}$ & 0.005  \\
\end{tabular}
\vspace*{5mm}
\caption{Computational results for the delayed logistic map, using the algorithm with quadratic convergence. The short computational times are due to the simple form of the map (no trigonometric functions).}
\label{results4}
\end{table}

\FloatBarrier
To compute an approximate enclosure of the rotation number, we first iterate the delayed logistic map several times in order to avoid transient behaviour. This leaves us with a point very close to the invariant curve (see Figure~\ref{invcurve}), and we will use this point as our initial point. To compute intervals $|I_i|$ from Theorem~\ref{thm:quadratic} at each iteration we consider an angle defined using arctangent function of two arguments. We present our results for different values of the parameter $\lambda$ in Table~\ref{results4}. For comparison, the second column $\rho_{V}$ was computed using the method presented in \cite{VanVeldhuizen1988203}.

\begin{figure}[!ht]
  \begin{center}
    {\includegraphics[width=0.6\linewidth]{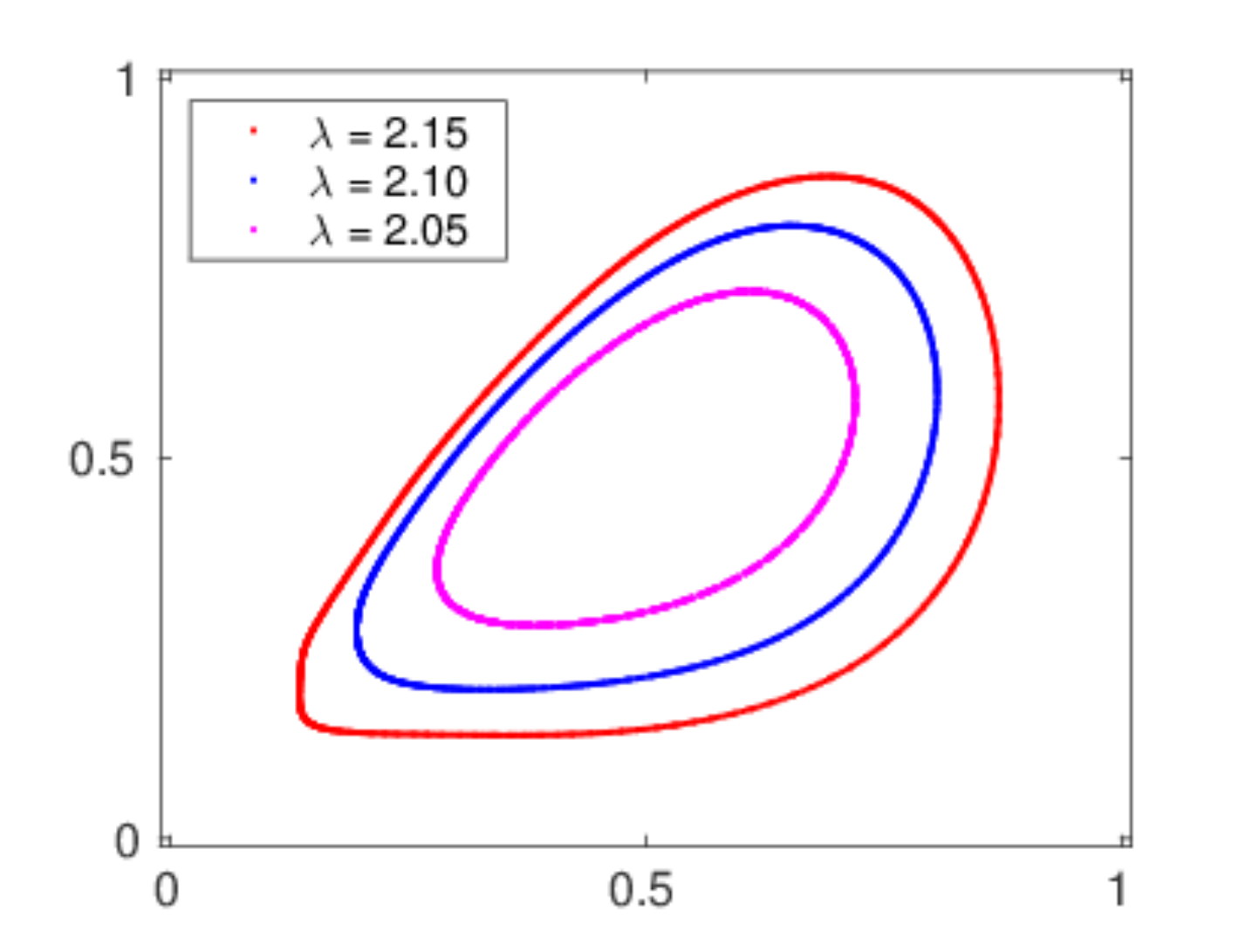}}
    \caption{\small{The invariant curves for the delayed logistic map for $\lambda \in \{2.05, 2.10, 2.15\}$.}} \label{invcurve}
\end{center}
\end{figure}

\section{Discussion}

We have presented a method for computing an \textit{enclosure} of the rotation number of a given circle map. This method is based upon set-valued computations combined with explicit bounds that can be obtained whilst computing the continued fraction of the sought rotation number. We have demonstrated our method by applying it to the classical two-parameter Arnold family of circle maps, and to the delayed logistic equation. In the latter case, we compare our results with those appearing in \cite{VanVeldhuizen1988203}.

Of course, our main goal so far has been to obtain mathematical rigour in the numerical computations necessary. Nevertheless, by utilizing the method of multiple shooting together with the interval Newton method, we can perform all computation in double precision. This makes our method quite fast in comparison to approximative methods that utilize multi-precision.

\section*{References}
\bibliographystyle{ieeetr}
\bibliography{rotationbibl}

\end{document}